\documentclass{amsart}
%
%

\usepackage{amsmath,amsfonts,amsthm,amstext,amscd}
\usepackage[latin1]{inputenc}
\usepackage[dvips]{graphicx}
\usepackage{array}
\usepackage{hyperref}
\usepackage{newlfont}
\def\rm#1{\mathrm{#1}}
\def\cal#1{\mathcal{#1}}

\def\lr#1{\langle #1\rangle}
\usepackage[all]{xy}

 \newtheorem{thm}{Theorem}[section]
 
 \newtheorem{lem}[thm]{Lemma}
 \newtheorem{prop}[thm]{Proposition}
 \theoremstyle{definition}
 
 \theoremstyle{remark}

 \numberwithin{equation}{section}

\begin{document}

%
%
%
%
%
%
%
%
%

\title[Curvature of Pull-back-bundles]
 {An obstructions to Non-negative curvature on Bundles with Pull-back connections}

\author[Speran\c ca]{L. D. Speran\c ca}

\address{L. D. Sperança\\IMECC - Unicamp,
Rua S\'ergio Buarque de Holanda\\
13083-859\\
Cidade Universit\'aria, Campinas \\
Brazil}

\email{llohann@ime.unicamp.br}

\thanks{This work was financially supported by FAPESP, grant number 2009/07953-8.}
\subjclass{53C20}

\keywords{nonegative sectional curvature, riemannian submersions, exotic spheres}

\date{}

\begin{abstract}Motivated to study the geometry of the exotic spheres constructed in \cite{s0}, we derive a necessary condition for non-negative sectional curvature in certain total spaces of Riemannian submersions with totally geodesic fibers. In particular, we derive a condition on the metric of the original base space  and prove that the bundles in \cite{s0} and \cite{DPR} have sections of negative curvature.\end{abstract}

\maketitle
\section{Introduction}
Recall from \cite{gw} that, for a Riemannian submersion with totally geodesic fibers $\pi:P\to N$, the vertizontal sectional curvatures of a plane $X\wedge U$ in $P$ is given by $sec_P(X,U)= |A^\dagger_XU|^2$, where $A^\dagger$ is the dual, with respect to $g_P$, of  the integrability tensor of $\pi$. In particular, a necessary condition for positive curvature on $P$ is the non-degenerancy of $A$. In this case, $\pi:P\to N$ is called a \emph{fat}-bundle. 

By the other hand, \cite{nara} proves that, there exists a universal bundle $V_k\to G_k$ with connection $\omega_0$ such that, if $\pi:(P,g_P)\to (M,g_M)$ is a $O(k)$ or $U(k)$ principal bundle with connection 1-form $\omega$, then, there is a bundle map $\Phi:P\to V_k$ such that $\Phi^*\omega_0=\omega$. In fact, $V_k\to G_k$ are exactly the respective Grassmanians. In particular, the universal principal $U(1)=S^1$-bundle is \emph{fat}.

In this work we assume that the degenerancy of the $A$-tensor in a bundle \linebreak $P'\to M$ is given in a controlled way  and prove a necessary condition for non-negative sectional curvature on $P'$. In fact, we prove the following

\begin{thm}\label{tg thm} Let $f^*P\to M$ be the pull-back of a fat Riemannian submersion with a connection metric with induced connection. Then, if $M$ is compact and $f^*P$ has non-negative sectional curvature, $f^{-1}(a)$ is a totally geodesic submanifold of $M$ for every regular point $a\in N$ of $f$.\end{thm}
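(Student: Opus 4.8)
The plan is to realize $f^*P\to M$ as itself a Riemannian submersion with totally geodesic fibers and to read the geometry of $f^{-1}(a)$ off the way its integrability tensor degenerates. First I would record that, because the metric is a connection metric and the connection on $f^*P$ is the pulled-back one, the fibers (whose metric is unchanged) remain totally geodesic, the horizontal distribution of $f^*P$ is $f^*$ of that of $P$, and the integrability tensor $\tilde A$ of $f^*P$ is obtained from the integrability tensor $A$ of $P$ by precomposition with $df$: for $v,w\in T_mM$ with horizontal lifts $\bar v,\bar w$, one has $\tilde A_{\bar v}\bar w=A_{\overline{df\,v}}\,\overline{df\,w}$ under the canonical identification of the vertical spaces. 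Fatness of $P$ says $A_{\bar\xi}$ is onto the vertical space for every $\bar\xi\neq 0$; hence $\tilde A_{\bar v}=0$ exactly when $v\in\ker df_m$. For a regular value $a$ the level set $f^{-1}(a)$ is a submanifold with $T_m f^{-1}(a)=\ker df_m$, so along $f^{-1}(a)$ the degenerate directions of $\tilde A$ are precisely the fiber directions.

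Next I would use non-negativity pointwise. Fix $m_0\in f^{-1}(a)$, $v_0\in\ker df_{m_0}$, and let $X$ be the horizontal lift of $v_0$ at a point over $m_0$. For every vertical $U$ the plane $X\wedge U$ is flat, since $\sec_{f^*P}(X,U)=|\tilde A^\dagger_X U|^2=0$. Because $\sec_{f^*P}\ge 0$, the quadratic form $W\mapsto\langle\tilde R(W,X)X,W\rangle$ is positive semidefinite and vanishes at $W=U$; its associated bilinear form $(W_1,W_2)\mapsto\langle\tilde R(W_1,X)X,W_2\rangle$ is symmetric, so $U$ is a critical point and hence $\tilde R(U,X)X=0$ for every vertical $U$.

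Then I would convert this into a statement about $M$ via O'Neill's formulas for totally geodesic fibers. Pairing $\tilde R(U,X)X$ with a horizontal $Z$ gives, up to sign, $\langle(\nabla_X\tilde A)_X Z,U\rangle$, so the previous step yields $(\nabla_X\tilde A)_X=0$ at $m_0$. I would compute this covariant derivative along the geodesic $\tilde\gamma$ with $\dot{\tilde\gamma}(0)=X$: since $\tilde A_X=0$ at $m_0$ and $\nabla_X X=0$, only the term differentiating the first slot of $A$ survives, and it equals $A_{\overline{\xi_0}}\,\overline{df\,w}$ with $\xi_0=\tfrac{D}{dt}\big|_0 df(\dot\gamma(t))=(\nabla df)(v_0,v_0)$. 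As $a$ is regular, $df_{m_0}$ is onto, so $\overline{df\,w}$ ranges over the whole horizontal space; the vanishing of $A_{\overline{\xi_0}}\,\overline{df\,w}$ for all $w$ forces $\xi_0=0$ by fatness. Thus $(\nabla df)(v_0,v_0)=0$ for all $v_0\in\ker df_{m_0}$, which is exactly the vanishing of the second fundamental form of $f^{-1}(a)$, giving the claim.

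The main obstacle I anticipate is the third step: establishing the O'Neill identity in the precise form $\langle\tilde R(U,X)X,Z\rangle=-\langle(\nabla_X\tilde A)_X Z,U\rangle$, and above all verifying that the surviving term of $(\nabla_X\tilde A)_X$ is genuinely $A_{\overline{(\nabla df)(v_0,v_0)}}$ and not corrupted by $\nabla A$, by the discrepancy between the metrics $g_M$ and $f^*g_N$ on the horizontal bundle, or by the horizontal lift failing to commute with $\tfrac{D}{dt}$ away from the zeros of $df$. Tracking which terms are killed by $df\,v_0=0$ is the delicate bookkeeping. Completeness, guaranteed by compactness of $M$, ensures the geodesics used in this computation are available.
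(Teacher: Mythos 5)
Your argument is correct, but it takes a genuinely different route from the paper's. You stay intrinsic: you identify the $A$-tensor of $f^*P\to M$ as the pull-back of that of $P$, extract $\tilde R(U,X)X=0$ from the flat vertizontal planes, and convert this through O'Neill's formula $\langle\tilde R(U,X)X,Z\rangle=\pm\langle(\nabla_X\tilde A)_XZ,U\rangle$ (valid because the fibers are totally geodesic) into the vanishing of $A_{\overline{(\nabla df)(v_0,v_0)}}$ on $\overline{df(T_{m_0}M)}$; fatness plus regularity of $a$ then kill $(\nabla df)(v_0,v_0)$. The delicate step you flag does close: on horizontal lifts $\tilde A$ is the pulled-back curvature $2$-form evaluated on $(df\,\cdot\,,df\,\cdot\,)$, so when you differentiate along the horizontal geodesic every term carrying $df\dot\gamma(0)=0$ dies, leaving only $A_{\overline{D/dt|_0\,df(\dot\gamma)}}\,\overline{dfZ}$, and $D/dt|_0\,df(\dot\gamma)=(\nabla df)(v_0,v_0)$ precisely because $\gamma$ is a geodesic of $M$ (a minor caveat: your claim that $\tilde A_{\bar v}=0$ \emph{only if} $v\in\ker df_m$ requires $df_m$ surjective, but you never use that direction). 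The paper instead argues extrinsically: it realizes the connection metric on $f^*P$ as the metric induced by the embedding $f^*P\subset (M,g_M')\times(P,g_P'')$ --- this is exactly where compactness of $M$ enters, to choose $\epsilon>0$ with $1-\epsilon\,df\,df^\dagger$ positive definite everywhere --- computes the second fundamental form of that embedding (Proposition \ref{prop II}), and applies the Gauss equation; the same polarization trick \eqref{RR} then gives $A(\mathcal{L}Od^2f(X,X),\mathcal{L}dfZ)=0$ (Lemma \ref{tg lem}), and its endgame coincides with yours, since $d^2f=\nabla df$, fatness and surjectivity of $df_x$ force $d^2f(X,X)=0$, and $d^2f(X,X)=-df(\mathrm{II}(X,X))$ with $df$ injective on $(\ker df)^\perp$. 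What each route buys: yours is pointwise and never uses compactness (geodesics are needed only for short time), so it actually proves the theorem without that hypothesis; the paper's Gauss-equation route avoids differentiating the $A$-tensor altogether, absorbing all $(\nabla A)$-type terms into the second fundamental form of the embedding, and its graph formalism ($\Pi$, $O$, $d^2f$) is machinery the rest of the paper reuses.
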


At the end, we prove that, for any metric in $S^8$ or $S^{10}$, the bundles with connections as in Theorem 4.3 of \cite{s0}, whose total spaces admit isometric actions with exotic spheres as quotients, have sections of negative curvature. 

Here, we confound the terms submersion and bundle, when we discuss connection metrics, since they are known to be equivalent (\cite{gw}). We follow the notation and conventions of \cite{gw}.

\section{Graphics and Pull-back Bundles}\label{sec 1}
We start with a charecterization of the normal space of a graph and its second fundamental form. Then, we connect it to the study of the metric defined by the standard embedding of a pull-back bundle. Afterward, we show that connection metric with the pull-backed connection can be realized in this fashion, as far as $M$ is compact, then we proof Theorem \ref{tg lem}.

Let $(M,g_M)$ and $(N,g_N)$ be Riemannian manifolds and $f:M\to N$ be a smooth map. We consider \textit{the graph of $f$}, $\Gamma_f$, as the subset 
\[\Gamma_f=\{(x,y)\in M\times N~|~y=f(x)\}.\]
 Let $F:M\to \Gamma_f$ be the diffeomorphism defined by $F(x)=(x,f(x))$ and observe that, for $\nu\Gamma_f$, the normal space to $F$, $F^*(\nu \Gamma_f)=f^*TN$. An isomorphism is given by the map $(\Xi_N)_x:T_{f(x)}N\to \nu \Gamma_f$  defined by $\Xi_N(Y)=(-df^\dagger(Y),Y)$, where $g_M(df^\dagger(X),Y)=g_N(X,dfY)$ being $T(M\times N)=TM\times TN$ identified in the natural way. We also define the isomorphism $\Xi:TM\oplus f^*TN\to F^*T(M\times N)$ as $\Xi(X,Y)=dF(X)+\Xi_N(Y)$.

Writing the vector $(X,Y)\in T(M\times N)$ as a column, one can verify that
\begin{gather}\label{Xi} \Xi^{-1}\begin{pmatrix}X\\Y\end{pmatrix}=\begin{pmatrix} (1+df^\dagger df)^{-1}&df^\dagger(1+dfdf^\dagger)^{-1}\\-df(1+df^\dagger df)^{-1}&(1+dfdf^\dagger)^{-1}\end{pmatrix}\begin{pmatrix}
	X\\Y\end{pmatrix}.\end{gather}

In particular, for $pr_{\nu\Gamma_f}$, the orthogonal prjection to $\nu\Gamma_f$, we have
\begin{lem}\label{nu graph}For $(X,Y),(X',Y')\in T(M\times N)$, 
\[\label{proj nu}g_M\times g_N\left(pr_{\nu\Gamma_f}\begin{pmatrix}
X\\ Y\end{pmatrix},\begin{array}{cc}X'\\Y'\end{array}\right)=g_N((1+dfdf^\dagger)^{-1}(Y-df(X)),Y'-df(X')).\]\end{lem}
\begin{proof}The lemma follows by noticing that 
\[pr_{\nu\Gamma_f}=\Xi pr_{f^*TN} \Xi^{-1}=\begin{pmatrix}
0&	-df^\dagger\\0 & 1\end{pmatrix}\begin{pmatrix} 0&0\\-df(1+df^\dagger df)^{-1}&(1+dfdf^\dagger)^{-1}\end{pmatrix},\]
 and $df(1+df^\dagger df)^{-1}=(1+dfdf^\dagger)^{-1}df$.
\end{proof}

Let $\Pi(X,Y)=Y-df(X)\in f^*TN$ and $O=(1+dfdf^\dagger)^{-1}$, so $pr_{\nu\Gamma_f}=\Pi^\dagger O\Pi$. 

Let $\tilde X=(X,Y)$ and $\tilde X'=(X',Y')$ be two vector fields in $\Gamma_f$ arbitrarly extended to $M\times N$. Noticing that $Y=df(X)$ and $Y'=df(X')$, we have that, for $\overline{\nabla}$, $\nabla^M$ and $\nabla^M$, the covariant derivatives associated to $g_M\times g_N$, $g_M$ and $g_N$,
\[\overline{\nabla}_{\tilde X}\tilde X'=(\nabla^M_XX',\nabla^N_YY').\]
In particular, we can think of $\nabla^N_{dfX}dfX'=\nabla^N_YY'$ as a well-defined vector-field, being the second fundamental form, $\rm{II}_f$, of $\Gamma_f$ completly determined by
\begin{gather}\label{IIf}d^2f(X,X')=\Pi(\overline{\nabla}_{dF(X)}dF(X'))=\nabla^N_{dfX}dfX'-df(\nabla^M_XX').\end{gather}
In fact, 
\[\rm{II}_f(\tilde X,\tilde X')=pr_{\nu\Gamma_f}\overline{\nabla}_{\tilde X}\tilde X'=\Pi^\dagger O\Pi\bar\nabla_{\tilde X}\tilde X'=\Pi^\dagger Od^2(X,X').\]
It follows that $d^2f$ is a simetric $(2,1)$-tensor. 

Now, let $\pi:(P,g_P)\to (N,g_N)$ be a Riemannian submersion. We define \linebreak$\pi_f:f^*P\to M$, the \emph{pull-back of $\pi$} as the manifold $f^*P=\{(x,p)\in M\times P~|~f(x)=\pi(p)\}$ endowed with the restriction of the projection to the first coordinate. We write $\lr{,}$ for $g_M\times g_P$ and consider $f^*P$ as a submanifold of $M\times P$. We denote by $\cal H_p$ and $\cal V_p$ the horizontal and vertical subspaces of $T_pP$ and by $\cal L_p:T_{\pi(p)}N\to P$ the horizontal lift. We observe that
\begin{prop}Let $\tilde\pi=(\rm{id}\times \pi):M\times P\to M\times N$. Then, the restriction of $\tilde\pi$ to $f^*P\to \Gamma_f$ is a Riemannian submersion. Furthermore, the derivative of $\tilde\pi$ induces an isometry $\nu_{(x,p)}P\to\nu_{(x,f(x))}\Gamma_f$ for every $(x,p)\in f^ *P$.\end{prop}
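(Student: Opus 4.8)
The plan is to work entirely inside $M\times P$ with the metric $\lr{,}=g_M\times g_P$, first identifying the tangent space of $f^*P$, then exhibiting the horizontal and vertical distributions of $\tilde\pi|_{f^*P}$ explicitly, and finally computing the normal space of $f^*P$ and tracking it through $d\tilde\pi$. First I would differentiate the defining constraint $f(x)=\pi(p)$ to get
\[T_{(x,p)}f^*P=\{(X,W)\in T_xM\times T_pP~|~df(X)=d\pi(W)\},\]
from which $\tilde\pi(x,p)=(x,\pi(p))=(x,f(x))\in\Gamma_f$ and $d\tilde\pi(X,W)=(X,d\pi(W))=(X,df(X))\in T_{(x,f(x))}\Gamma_f$, so the restriction does map $f^*P\to\Gamma_f$. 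Surjectivity of $d\tilde\pi|_{f^*P}$ onto $T\Gamma_f$ follows from the horizontal lift: given $X\in T_xM$, the pair $(X,\cal L_p(df X))$ lies in $T_{(x,p)}f^*P$ and maps to $(X,df(X))$.

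Next I would describe the vertical and horizontal spaces of $\tilde\pi|_{f^*P}$. The kernel of $d\tilde\pi|_{f^*P}$ is $\{0\}\times\cal V_p$, and its $\lr{,}$-orthogonal complement inside $T_{(x,p)}f^*P$ is $\{(X,\cal L_p(df X))~|~X\in T_xM\}$, since each such vector is manifestly orthogonal to every $(0,V)$ with $V\in\cal V_p$ and the dimensions are complementary. On this horizontal space the key input is that $\pi$ is a Riemannian submersion, so $\cal L_p$ is a linear isometry, giving
\[\lr{(X,\cal L_p(df X)),(X,\cal L_p(df X))}=g_M(X,X)+g_N(df X,df X),\]
which is exactly the $g_M\times g_N$-norm of $d\tilde\pi(X,\cal L_p(df X))=(X,df(X))$ in $\Gamma_f$. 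Hence $d\tilde\pi$ restricts to an isometry on the horizontal space onto $T\Gamma_f$, establishing that $\tilde\pi|_{f^*P}$ is a Riemannian submersion.

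For the second assertion I would compute the normal space of $f^*P$ in $M\times P$. Testing a candidate normal vector $(A,B)$ against the vertical directions $(0,V)$ forces $B$ to be horizontal, and testing against the horizontal directions $(X,\cal L_p(df X))$, again using the isometry of $\cal L_p$ together with the defining relation of $df^\dagger$, forces $A=-df^\dagger\beta$ when $B=\cal L_p\beta$. Thus the normal space is $\{(-df^\dagger\beta,\cal L_p\beta)~|~\beta\in T_{f(x)}N\}$, of dimension $\dim N$. Applying $d\tilde\pi$ yields $(-df^\dagger\beta,\beta)=\Xi_N(\beta)\in\nu_{(x,f(x))}\Gamma_f$, and comparing the norms, $g_M(df^\dagger\beta,df^\dagger\beta)+g_N(\beta,\beta)$ in either space, shows this map is an isometry, which is the stated claim.

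The computations are routine linear algebra, and the only point requiring care—the place where the Riemannian submersion hypothesis on $\pi$ is genuinely used—is the repeated appeal to $\cal L_p$ being a linear isometry, which is precisely what makes the norms agree on both the horizontal and the normal spaces. Once the tangent and normal spaces are expressed in terms of $\cal L_p$ and $df^\dagger$, everything reduces to the orthogonal splitting $T_pP=\cal H_p\oplus\cal V_p$.
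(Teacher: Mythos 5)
Your proof is correct. For the submersion half it coincides with the paper's argument: both identify $T_{(x,p)}f^*P$, take $\{0\}\times\cal V_p$ as the vertical space, exhibit the horizontal space as $\{(X,\cal L_p(df_x X))\}$, and use that $\cal L_p$ is a linear isometry to match norms with $T\Gamma_f$. For the normal-space half your route differs in execution. The paper never computes $\nu f^*P$: it observes that $\nu_{(x,p)}f^*P\subset T_xM\times\cal H_p$, that $\rm{id}\times d\pi_p$ restricted to $T_xM\times\cal H_p$ is a linear isometry onto $T_xM\times T_{f(x)}N$ carrying $(T_xM\times\cal H_p)\cap T_{(x,p)}f^*P$ onto $T_{(x,f(x))}\Gamma_f$, and then concludes by the abstract fact that a linear isometry carries orthogonal complements to orthogonal complements. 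You instead parametrize $\nu_{(x,p)}f^*P=\{(-df^\dagger\beta,\cal L_p\beta)\mid \beta\in T_{f(x)}N\}$ explicitly and verify by hand that $d\tilde\pi$ sends it to $\Xi_N(\beta)$ with equal norm. The paper's version is shorter and computation-free; yours buys an explicit formula for the normal space and realizes the isometry as the map $\Xi_N$ introduced earlier, making concrete the identification $\nu f^*P\cong f^*TN$ that the paper uses implicitly afterwards when writing normal components through $\Pi^\dagger O$. One point you should state rather than leave tacit: testing orthogonality only against the families $(0,V)$ and $(X,\cal L_p(df X))$ characterizes the normal space because these families span $T_{(x,p)}f^*P$ --- any $(X,W)\in T_{(x,p)}f^*P$ splits as $(X,\cal L_p(df X))+(0,W-\cal L_p(df X))$, the last term being vertical --- which deserves one line in the write-up.
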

\begin{proof}We first observe that 
\[T_{(x,p)}f^*P=\{(X,E)\in T_xM\times T_pP~|~df_x(X)=d\pi_p(E)\}.\]
 In particular, $V_{(x,p)}=\{(0,v)~|~v\in V_p\}$ and $\nu_{(x,p)} f^*P\subset T_xM\times \cal H_p$. Since $\rm{id}\times d\pi_p$ is an isometry when restricted to $T_xM\times \cal H_p$ and it sends \linebreak$(T_xM\times \cal H_p)\cap T_{(x,p)}f^*P$ to $T\Gamma_f$, it sends vectors orthogonal to $Tf^*P$ to $\nu \Gamma_f$ in an isometric way. 
Now, the first statement follows by noticing that  $tilde X=(X,\cal L_p(df_x(X)))\in T_{(x,p)}f^*P$ and is orthogonal to $\{0\}\times V_p=V_{(x,p)}$. But, 
\[||(X,\cal L_p(df_x(X)))||^2_{M\times P}=||X||^2_M+||df_x(X)||^2_N=||d\tilde\pi(X,\cal L_p df_x(X))||^2_{M\times N}.\]\end{proof}

For the rest of the paper, we assume that the fibers of $\pi$ are totally geodesic, and, first reduce the proof of Theorem \ref{tg thm} to the case of pull-back bundles. With effect, let $(f^*P,g_P')\to (M,g_M)$ be a submmersion with a connection metric induced by the connection in $\pi:P\to N$, with isometric fibers in both bundles. Then

\begin{prop} If $M$ is compact, then, there exists  a metric $g_M'$ on $M$, $\epsilon>0$ and an isometry $(f^*P,g_{P'})\to (f^*P,g_M'\times g''_P)$, where $g''_P$ is the connection metric associated with $\epsilon g_N$ and the connection in $\pi$. Furthermore, given $a\in N$, for any submanifold $L\subset f^{-1}(a)\subset M$, the second fundamental form of $L$ on the metrics $g_M$ and $g_M'$ coincide.\end{prop}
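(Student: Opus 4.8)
The plan is to reduce the statement to a rescaling and warping argument on the base, exploiting the explicit isometry $\Xi$ and the projection formula from Lemma \ref{nu graph}. First I would recall that a connection metric $g'_P$ on $f^*P$ is built from three ingredients: a metric on the base $M$, the fiber metric, and the horizontal distribution determined by the pulled-back connection. Since the fibers of $\pi$ are totally geodesic and the fiber metric is fixed (isometric fibers in both bundles by hypothesis), the only freedom lies in the base metric and in how horizontal vectors are measured. The key computation from the Proposition just above shows that a horizontal lift $(X,\mathcal L_p(df_x X))$ has squared norm $\|X\|_M^2 + \|df_x X\|_N^2$; this is exactly the graph metric $g_M + f^*g_N$ on $M$ viewed through $F$. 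So the natural candidate for $g'_M$ is a metric that, after scaling the $N$-directions by $\epsilon$, makes the horizontal subbundle of $(f^*P, g'_P)$ isometric to the horizontal subbundle of the standard product-induced metric.

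\medskip

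The key steps, in order, are as follows. First I would write $g'_P$ in the form dictated by the connection-metric construction and compare its horizontal component with $g_M + \epsilon\, f^*g_N$; compactness of $M$ enters here to guarantee a uniform $\epsilon>0$ for which the resulting symmetric $2$-tensor $g'_M$ on $M$ is positive definite (so that $g'_M := $ horizontal part of $g'_P$ minus the $\epsilon f^* g_N$ contribution stays a genuine Riemannian metric). Second, I would define the bundle map $f^*P \to f^*P$ that is the identity on each fiber and acts as $F$ (the graph diffeomorphism) on the base, and verify using \eqref{Xi} and Lemma \ref{nu graph} that it intertwines $g'_P$ with $g'_M \times g''_P$, where $g''_P$ is the connection metric for $\epsilon g_N$. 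The vertical parts match because the fibers are isometric; the horizontal parts match by the norm computation in the preceding Proposition (with $g_N$ replaced by $\epsilon g_N$); the mixed vertizontal terms vanish on both sides because both are connection metrics sharing the same horizontal distribution.

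\medskip

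For the final sentence, I would argue that along the fiber $f^{-1}(a)$ the map $f$ is locally constant, so $df$ vanishes on $Tf^{-1}(a)$, whence $f^*g_N$ vanishes on vectors tangent to any submanifold $L\subset f^{-1}(a)$. Therefore $g'_M$ and $g_M$ restrict to the \emph{same} inner product on $TL$, and more importantly their difference is a tensor built from $f^*g_N$ which, by \eqref{IIf}, has covariant derivative controlled by $d^2f$. The point is that the second fundamental form of $L$ depends only on the $1$-jet of the metric in directions normal to $L$; since $g'_M - g_M$ and its first derivatives along $L$ are governed by $df$ and $d^2f$ restricted to $TL$, and $df|_{TL}=0$, the two second fundamental forms agree. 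I expect the main obstacle to be precisely this last point: showing that $g'_M - g_M$ vanishes \emph{to first order} along $L$, not merely on $L$ itself, since the second fundamental form involves the Christoffel symbols and hence first derivatives of the metric. Handling this cleanly requires using \eqref{IIf} to express how $d^2f$ enters, and checking that the normal derivative of $f^*g_N$ along $L$ still factors through $df|_{TL}=0$; this is where the symmetry of $d^2f$ established above does the work.
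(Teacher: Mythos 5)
Your treatment of the first statement is essentially the paper's own proof: the paper also takes $g_M'=g_M-\epsilon f^*g_N$ (written there as $(1-\epsilon\, df df^\dagger)g_M$), uses compactness of $M$ to choose a uniform $\epsilon>0$ keeping this positive definite, and then invokes the fact that a connection metric is determined by the base metric, the fiber metric and the connection, together with the identity $F^*(g_M'\times\epsilon g_N)=g_M'+\epsilon f^*g_N=g_M$. Your horizontal/vertical/vertizontal matching is that same argument.

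Your argument for the second statement has a genuine gap, and it sits exactly where you predicted the main obstacle would be. Everything in your plan hinges on the claim that $h:=g_M'-g_M=-\epsilon f^*g_N$ ``vanishes to first order along $L$, not merely on $L$ itself.'' That claim is false: $h$ does not vanish along $L$ even at order zero, because $df$ is nonzero on vectors \emph{normal} to $L$ (only $TL\subset\ker df$). For normal vectors $Z,W$ at $p\in L$ one has $h(Z,W)=-\epsilon g_N(dfZ,dfW)\neq 0$ in general, so no jet-vanishing argument for the full tensor $h$ can succeed; indeed the two Levi-Civita connections genuinely differ at points of $L$. Concretely, take $M=\mathbb{R}^2$ flat, $N=\mathbb{R}$, $f(x,y)=y+x^2$, $L=f^{-1}(0)$, so that $g_M'=dx^2+dy^2-\epsilon(2x\,dx+dy)^2$. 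At the origin the Christoffel symbol $\Gamma^y_{xx}$ of $g_M'$ equals $-2\epsilon/(1-\epsilon)$ while it vanishes for $g_M$, and the normal-valued second fundamental form of $L$ at the origin is $(0,-2)$ for $g_M$ but $(0,-2/(1-\epsilon))$ for $g_M'$. What is true --- and is all the Proposition is used for later --- is the coincidence of the scalar forms: $g_M(\mathrm{II}(X,Y),Z)=g_M'(\mathrm{II}'(X,Y),Z)$ for $X,Y\in TL$ and $Z$ normal; in particular $L$ is totally geodesic for one metric if and only if it is for the other.

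The proof of that (sufficient) statement does not go through first-order vanishing of $h$, but through the observation that $g(\mathrm{II}(X,Y),Z)$ sees only special components of the $1$-jet of the metric along $L$. Extend $X,Y$ tangent to $L$ and $Z$ arbitrarily; the Koszul formula for $2g(\nabla_XY,Z)$ involves derivatives in \emph{tangent} directions of $g(\cdot,Z)$ with a tangent slot, the single normal-direction derivative $Zg(X,Y)$ with \emph{both} arguments tangent, and zeroth-order terms each carrying at least one tangent argument. For $h=-\epsilon f^*g_N$ every one of these vanishes on $L$: the terms with a tangent slot because $h(Y,\cdot)=-\epsilon g_N(dfY,df\,\cdot)$ vanishes identically along $L$ and is differentiated only in directions tangent to $L$, and $Zh(X,Y)$ because $h(X,Y)=-\epsilon g_N(dfX,dfY)$ vanishes to \emph{second} order along $L$, each factor vanishing separately. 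Combined with the order-zero fact \eqref{gM'} that the normal space and the orthogonal projection onto it agree for the two metrics, this yields the conclusion. Note this makes precise your remark that ``the normal derivative of $f^*g_N$ along $L$ still factors through $df|_{TL}=0$,'' but the mechanism is that each term of $\nabla_Z(f^*g_N)(X,Y)$ retains a bare factor $dfX$ or $dfY$; the symmetry of $d^2f$, which you expected to do the work, plays no role. This Koszul-restriction argument is also what the paper's own terse justification (that $\mathrm{II}$ ``just depends on the normal projection'') is gesturing at, so your instinct that the paper's one-line reason needed expansion was correct, even though the expansion you proposed is not the one that works.
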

\begin{proof}Since connection metrics are specified by the connection and the metric in the base, according to what was discussed so far, for the first statement, it is sufficient to prove that there is a metric $g_M'$ and $\epsilon>0$ such that, the graph  of $f:(M,g_M')\to (N,\epsilon g_N)$ is isometric to $(M,g_M)$.  Write $df^\dagger$ for the dual of $df$ with respect to $g_M$ and $g_N$ and let $\epsilon>0$ be a real number such that $1-\epsilon dfdf^\dagger$ is still  positive definite. Then, for $g_M'=(1-\epsilon dfdf^\dagger)g_M$ we have that the metric in the graph of $f:(M,g_M')\to (N,\epsilon g_N)$ is
\[F^*(g_M'\times \epsilon g_N)=g_M'+\epsilon f^*g_N=g_M-\epsilon f^*g_N+\epsilon f^*g_N=g_M\]
as desired.
Now, let $X,Y$ be vector fields of $L$. We observe that, for $Z$, a vector field of $M$ restricted fo $L$ we have 
\begin{gather}\label{gM'}g_M'(X,Z)=g_M(X,Y)-\epsilon g_N(dfX,dfZ)=g_M(X,Y)\end{gather}
since the entire tangent bundle of $L$ is contained in the kernel of $df$. In particular, the normal bundles and the intrinsic metrics of $L$ coincide for both metrics. It proves the assertion, since the second fundamental form of a fixed submanifold just depends on the normal projection - with effect, recall that $\rm{II}(X,Y)=\frac{1}{2}[X,Y]$.\end{proof}

Proceeding to the second fundamental form of the pull-back bundle, we write $A(X,Y)=\frac{1}{2}pr_{\cal V}[pr_{\cal H}X,pr_{\cal H}Y]$, for the $A$-tensor of $\pi$ and $g_P(A^\dagger_XU,Y)=g_P(U,A(X,Y))$. We have
\begin{prop}\label{prop II} If $\tilde X=(X,Y),\tilde X'=(X',Y')$ are vector fields on $f^*P$, then 
\[d\tilde\pi\rm{II}(\tilde X,\tilde X')=\Pi^\dagger O(d^2f(X,X')+\Lambda(Y,Y')),\]
where $\Lambda(Y,Y')=(0,d\pi(-A_YY'-A_{Y'}Y))$.\end{prop}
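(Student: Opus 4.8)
The plan is to read off $\rm{II}$ from its definition $\rm{II}(\tilde X,\tilde X')=pr_{\nu f^*P}\overline\nabla_{\tilde X}\tilde X'$ and then transport it by $d\tilde\pi$ to the graph, where \eqref{IIf} and Lemma~\ref{nu graph} already supply everything needed. Because $g_M\times g_P$ is a product metric, the ambient derivative splits exactly as in the computation preceding \eqref{IIf}, namely $\overline\nabla_{\tilde X}\tilde X'=(\nabla^M_XX',\nabla^P_YY')$. Applying $d\tilde\pi=\rm{id}\times d\pi$ then gives $d\tilde\pi\,\overline\nabla_{\tilde X}\tilde X'=(\nabla^M_XX',d\pi(\nabla^P_YY'))$, so the whole problem reduces to understanding $d\pi(\nabla^P_YY')$ together with the way $d\tilde\pi$ sees normal vectors.

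The key structural observation is that $d\tilde\pi$ intertwines the two normal projections, $d\tilde\pi\circ pr_{\nu f^*P}=pr_{\nu\Gamma_f}\circ d\tilde\pi$. This is immediate from the preceding Proposition: splitting $T(M\times P)|_{f^*P}$ into the $\tilde\pi$-horizontal space, the $\tilde\pi$-vertical space $\{0\}\times\cal V_p$, and $\nu f^*P$, the map $d\tilde\pi$ annihilates the vertical summand, sends the horizontal summand into $T\Gamma_f$, and restricts to an isometry $\nu f^*P\to\nu\Gamma_f$; hence the $\nu\Gamma_f$-part of $d\tilde\pi(w)$ is exactly $d\tilde\pi$ applied to the $\nu f^*P$-part of $w$. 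Using $pr_{\nu\Gamma_f}=\Pi^\dagger O\Pi$ from Lemma~\ref{nu graph} and $\Pi(Z,W)=W-df(Z)$, this yields $d\tilde\pi\,\rm{II}(\tilde X,\tilde X')=\Pi^\dagger O\Pi\bigl(\nabla^M_XX',d\pi(\nabla^P_YY')\bigr)=\Pi^\dagger O\bigl(d\pi(\nabla^P_YY')-df(\nabla^M_XX')\bigr)$.

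It remains to evaluate $d\pi(\nabla^P_YY')$, which is the heart of the argument. I would split $Y=\hat Y+v$ and $Y'=\hat Y'+v'$ into $\cal H$- and $\cal V$-parts; the tangency condition $d\pi Y=df X$, $d\pi Y'=df X'$ forces $\hat Y,\hat Y'$ to be the horizontal lifts of $dfX,dfX'$. Expanding $\nabla^P_YY'$ into its four terms and invoking the O'Neill identities for a submersion with totally geodesic fibers (vanishing $T$-tensor): the horizontal-horizontal term projects to $\nabla^N_{dfX}dfX'$; the mixed terms $\nabla^P_{\hat Y}v'$ and $\nabla^P_v\hat Y'$ survive only through their horizontal $A$-parts; and $\nabla^P_vv'$ stays vertical and dies under $d\pi$. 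Hence $d\pi(\nabla^P_YY')-df(\nabla^M_XX')=\bigl(\nabla^N_{dfX}dfX'-df(\nabla^M_XX')\bigr)+d\pi(\text{mixed }A\text{-terms})$, where the first bracket is $d^2f(X,X')$ by \eqref{IIf}. I expect the main obstacle to be precisely the identification of the surviving mixed terms with $\Lambda(Y,Y')$: this requires careful bookkeeping of which O'Neill outputs are horizontal versus vertical, and fixing signs through the skew-symmetry $g_P(A^\dagger_XU,Z)=g_P(U,A(X,Z))$ relating $A$ and $A^\dagger$ in the Gromoll--Walschap conventions. Granting that identification, assembling the three displays above gives $d\tilde\pi\,\rm{II}(\tilde X,\tilde X')=\Pi^\dagger O\bigl(d^2f(X,X')+\Lambda(Y,Y')\bigr)$, as claimed.
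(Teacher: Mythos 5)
Your argument is essentially the paper's, differently packaged, and it is sound as far as it goes. The paper performs the same reduction by pairing $\rm{II}(\tilde X,\tilde X')$ against an arbitrary normal vector $\tilde Z$ and using that $d\tilde\pi$ is isometric on normal spaces; your operator identity $d\tilde\pi\circ pr_{\nu f^*P}=pr_{\nu\Gamma_f}\circ d\tilde\pi$ is a cleaner way of saying the same thing, and your justification of it (the three-fold splitting of $T(M\times P)|_{f^*P}$ together with the preceding Proposition) is correct. The only substantive difference is at the last step: where you expand $\nabla^P_YY'$ into its four O'Neill terms, the paper simply quotes from \cite{gw} the identity $d\pi\nabla^P_{Y}Y'=\nabla^N_{d\pi Y}d\pi Y'-d\pi(A^\dagger_{Y'}Y+A^\dagger_{Y}Y')$, which is precisely the outcome of the expansion you set up.

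That means the step you leave ``granted'' --- identifying the surviving mixed terms with $\Lambda(Y,Y')$ --- is the actual content of the paper's citation, so you should close it rather than defer it; fortunately it is two lines. For $Z$ basic horizontal and $v'$ vertical, $g_P(\nabla^P_{\hat Y}v',Z)=-g_P(v',\nabla^P_{\hat Y}Z)=-g_P(v',A(\hat Y,Z))=-g_P(A^\dagger_{\hat Y}v',Z)$, so $\cal H\nabla^P_{\hat Y}v'=-A^\dagger_{\hat Y}v'$; and since $[v,\hat Y']$ is vertical when $\hat Y'$ is basic, $\cal H\nabla^P_{v}\hat Y'=\cal H\nabla^P_{\hat Y'}v=-A^\dagger_{\hat Y'}v$. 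Hence the mixed terms contribute exactly $d\pi(-A^\dagger_{Y}Y'-A^\dagger_{Y'}Y)$, where only the horizontal part of the subscript and the vertical part of the argument matter. This also settles the sign worry you raised, and exposes a wrinkle worth stating explicitly: the $\Lambda$ in the statement is written with $A$, but the Gromoll--Walschap tensor $A(Y,Y')$ is vertical-valued, so $d\pi(-A_YY'-A_{Y'}Y)$ read literally would vanish; the statement's $A$ must be read as $A^\dagger$, exactly as the paper's own proof writes it. With that reading, your assembly $d\tilde\pi\,\rm{II}(\tilde X,\tilde X')=\Pi^\dagger O(d\pi\nabla^P_YY'-df(\nabla^M_XX'))=\Pi^\dagger O(d^2f(X,X')+\Lambda(Y,Y'))$ is complete and agrees with the paper.
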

\begin{proof}From \cite{gw}, we have
\[d\pi\nabla^P_{Y}Y'=\nabla^N_{d\pi Y}d\pi Y'-d\pi(A^\dagger_{Y'}Y+A^\dagger_{Y}{Y'}).\]
So, for $\tilde Z=(X'',Y'')$, a normal vector and $g_{M\times N}=g_M\times g_N$,
\begin{align*}\lr{\rm{II}(\tilde X,\tilde Y),\tilde Z}&=\lr{pr_{\nu f^*P}(\nabla^M_XX',\nabla^P_YY'),\tilde Z}
\\&=g_{M\times N}(pr_{\nu\Gamma_f}d\tilde\pi(\nabla^M_XX',\nabla^P_YY'),d\tilde\pi\tilde Z)
\\&=g_{M\times N}(\Pi^\dagger O\Pi(\nabla^M_XX',\nabla^N_{d\pi Y}d\pi Y'-d\pi(A^\dagger_{Y'}Y+A^\dagger_{Y}{Y'})),d\tilde\pi\tilde Z)
\\&=g_{M\times N}(\Pi^\dagger O(d^2f(X,X')+\Lambda(Y,Y')),d\tilde\pi\tilde Z).\qedhere\end{align*}\end{proof}

The following lemma culminates in Theorem \ref{tg thm}.

\begin{lem}\label{tg lem}If $f^*P\to M$ has non-negative curvature, then 
\[A(\cal LOd^2f(X,X),\cal Ldf Z)=0.\]\end{lem}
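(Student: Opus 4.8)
The plan is to exhibit $\lr{U,A(\cal L O\,d^2f(X,X),\cal L df Z)}$ as the linear coefficient of a one-parameter family of sectional curvatures of $f^*P$ whose quadratic coefficient is forced to vanish, and then to invoke the elementary fact that an affine function bounded below is constant. Non-negativity is used only through the existence of a \emph{flat} vertizontal plane, and this flatness is available precisely when $df(X)=0$; accordingly I take $X$ tangent to the fibre $f^{-1}(a)$ (the case needed for Theorem \ref{tg thm}), and set $\tilde X=(X,0)$, $\tilde Z=(Z,\cal L df Z)$ and $\tilde U=(0,U)$ with $U$ vertical, all tangent to $f^*P$. Since $df(X)=0$ the horizontal lift attached to $\tilde X$ vanishes, so the integrability tensor $A'$ of $\pi_f$ has $A'_{\tilde X}=0$; by the vertizontal curvature formula recalled in the introduction, $sec_{f^*P}(\tilde X,\tilde U)=|A'^\dagger_{\tilde X}U|^2=0$, i.e. $\tilde X\wedge\tilde U$ is flat.

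Next I would evaluate the three relevant second fundamental forms through Proposition \ref{prop II}. Because $d^2f$ is bilinear and $\Lambda$ annihilates any pair having a zero $P$-entry, one gets $\rm{II}(\tilde X,\tilde U)=0$ and $d\tilde\pi\,\rm{II}(\tilde X,\tilde X)=\Pi^\dagger O\,d^2f(X,X)$, while $d\tilde\pi\,\rm{II}(\tilde Z,\tilde U)=\Pi^\dagger O\,\Lambda(\cal L df Z,U)=-\Pi^\dagger O\,d\pi A^\dagger_{\cal L df Z}U$. The crux is then the mixed inner product: combining that $d\tilde\pi$ is an isometry of normal spaces, the identity $\lr{\Pi^\dagger O w_1,\Pi^\dagger O w_2}=g_N(O w_1,w_2)$ read off from Lemma \ref{nu graph}, the defining relation $g_P(A^\dagger_HU,H')=g_P(U,A(H,H'))$ and the antisymmetry of $A$, I expect to obtain
\[\lr{\rm{II}(\tilde X,\tilde X),\rm{II}(\tilde Z,\tilde U)}=\lr{U,A(\cal L O\,d^2f(X,X),\cal L df Z)}.\]
Getting the weight $O$ and the sign correct in this identification is the only genuinely delicate point.

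Finally I would run the curvature argument. Consider $N(s)=\lr{R^{f^*P}(\tilde X,\tilde Z+s\tilde U)(\tilde Z+s\tilde U),\tilde X}\ge 0$, a quadratic polynomial in $s$. Its leading coefficient is the numerator of $sec_{f^*P}(\tilde X,\tilde U)$, which vanishes by the flatness above, so $N(s)=N_0+N_1s$ is affine and non-negativity for every real $s$ forces $N_1=0$. Expanding $N_1=2\lr{R^{f^*P}(\tilde X,\tilde Z)\tilde U,\tilde X}$ by the Gauss equation, the ambient term $\lr{R^{M\times P}(\tilde X,\tilde Z)\tilde U,\tilde X}$ vanishes (the product curvature dies because $\tilde X$ has zero $P$-component and $\tilde U$ has zero $M$-component) and so does $\lr{\rm{II}(\tilde X,\tilde U),\rm{II}(\tilde Z,\tilde X)}$ (because $\rm{II}(\tilde X,\tilde U)=0$), leaving $N_1=2\lr{\rm{II}(\tilde X,\tilde X),\rm{II}(\tilde Z,\tilde U)}=2\lr{U,A(\cal L O\,d^2f(X,X),\cal L df Z)}$. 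Hence $\lr{U,A(\cal L O\,d^2f(X,X),\cal L df Z)}=0$ for all vertical $U$, which is the claim. I expect the main obstacle to be the bookkeeping of the second paragraph---verifying that, after all the product and flatness simplifications, the unique surviving piece of the mixed Gauss term is exactly the stated $A$-coupling---rather than anything conceptual, since non-negativity itself enters only through the affine-function observation.
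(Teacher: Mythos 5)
Your proposal is correct and takes essentially the same route as the paper: the same test vectors $\tilde X=(X,0)$ with $df(X)=0$, $\tilde U=(0,U)$, $\tilde Z=(Z,\cal L df Z)$, the same affine-polynomial non-negativity argument, and the same Gauss-equation/Proposition \ref{prop II} computation identifying the linear coefficient with $\lr{U,A(\cal LOd^2f(X,X),\cal Ldf Z)}$ (your mixed-term identity does check out, via $\Pi\Pi^\dagger=O^{-1}$, the isometry of normal spaces, and the definition of $A^\dagger$, and the sign is immaterial since the conclusion is vanishing). The only cosmetic difference is that you obtain the flatness $R(\tilde U,\tilde X,\tilde X,\tilde U)=0$ from the vertizontal curvature formula applied to the pull-back submersion, whereas the paper reads it off the same Gauss-equation expansion with $Y=Y'=0$.
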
 
\begin{proof} We recall that, given three vectors $X,Z,U$, for $R$, the $(4,0)$-curvature tensor, we have 
\[R(X,tU+Z,tU+Z,X)=t^2R(X,U,U,X)+2tR(X,U,Z,X)+R(X,Z,Z,X)\] 
In particular, if $R(X,U,U,X)=0$, $R(X,tU+Z,tU+Z,X)\geq 0$ for all $t$ if and only if $R(X,Z,ZX)\geq 0$ and 
\begin{gather}\label{RR}R(X,U,Z,X)=0.\end{gather} 

By the other hand, writing $R_M,~ R_N$ and $R_P$ for the $(4,0)$-curvature tensors of $M,~N$ and $P$, respectively, Gauss formula together with Proposition \ref{prop II} gives us
\begin{align*}R(\tilde X',\tilde X,\tilde X,\tilde Z)&= R_M(X',X,X,X'')+R_P(Y',Y,Y,Y'')\\ &+g_N(O(d^2f(X,X)-2A^*_YY),d^2f(X',X'')+\Lambda(Y',Y''))\\&-g_N(O(d^2f(X,X')+\Lambda(Y,Y')),d^2f(X,X'')+\Lambda(Y,Y''))\end{align*}

Where $\tilde X,\tilde X'$ and $\tilde Z$ are as in Proposition \ref{prop II}. Observing that, if $X\in\ker df$, $\tilde X=(X,0)$ is horizontal in $f^*P$, and taking $\tilde U=(0,U)$ a vertical vector and $\tilde Z=(Z,\cal L_pdfZ)$ an horizontal vector, we conclude that
\begin{gather}\label{R1}R(\tilde U,\tilde X,\tilde X,\tilde U)=0\end{gather}
and
\begin{align}\nonumber R(\tilde U,\tilde X,\tilde X,\tilde Z)&=g_N(Od^2f(X,X),d^2f(0,Z)+\Lambda(U,\cal L_pdfZ))\\
\label{R2} &=-g_P(A(\cal Ldf Z,\cal LOd^2f(X,X)),U)\end{align}
So, condition  \eqref{RR} holds for every $U$ if and only if $A(\cal Ldf Z,\cal LOd^2f(X,X))=0$. \end{proof}

\begin{proof}[Proof of Theorem \ref{tg thm}:] Now, for $\pi(p)=a\in N$ and $\pi$ fat, the map $\Xi_X:T_aN\to \cal V_p$ defined by
\[ Y\mapsto A(\cal L_pOd^2f(X,X),\cal L_pY)\]
is surjective  if and only if $d^2f(X,X)\neq 0$. In particular, $\dim\ker \Xi_X>\dim N-\dim \cal V$ if and only if $d^2f(X,X)=0$. By the other hand, condition \eqref{RR} implies that $df(T_xM)\subset \ker\Xi_X$, so, if $a$ is a regular point $\dim \ker\Xi_X\geq \dim df(T_xM)=\dim N> \dim N-\dim\cal V$.

Since, in a regular point, we always can choose an extension $\bar X$ of $X$ such that $df\bar X=0$, we have  \[d^2f(X,X)=\nabla^N_{df\bar X}df\bar X-df(\nabla^M_{\bar X}\bar X)=-df(pr_{(\ker df)^\bot}\nabla^M_{\bar X}\bar X)\]
We conclude the proof by observing that $pr_{(\ker df)^\bot}\nabla^M_{\bar X}\bar X$ is the second fundamental form of $f^{-1}(a)$ and that $df$ is injective on $(\ker df)^\bot$. \end{proof}

\section{An Application to the study of Exotic Spheres}

Given a pull-back bundle $f^*P\to M$ with the induced connection, Theorem \ref{tg thm} gives an obstruction for the problem  of finding a metric in $M$ such that the respective connection metric has non-negative curvature. I.e., for any such metric, the regular level sets of $f$ must be totally geodesic. Assuming then, that a metric satisfy this condition, we have 

\begin{lem}\label{tg fol lem} Let $c:[0,1]\to N$ is a curve such that $c(0)$ is a singular point and $c(t)$ is regular for all $t>0$. Then, the regular level sets $f^{-1}(c(t))$ are isometric to $f^{-1}(c(1))$ for $t>0$ and there is a totally geodesic embedding $f^{-1}(c(1))\hookrightarrow f^{-1}(c(0))$.\end{lem}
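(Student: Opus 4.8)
The plan is to treat the regular level sets as the leaves of the foliation $\mathcal{F}$ of $M_{reg}=f^{-1}(N_{reg})$ given by the fibers of $f$, which are totally geodesic by Theorem \ref{tg thm}, and to move between them by a gradient flow adapted to the curve $c$. First I would restrict to the tube $W=f^{-1}(c([0,1]))$ and introduce on $W\cap M_{reg}$ the function $\rho=c^{-1}\circ f$, so that $L_t:=f^{-1}(c(t))=\rho^{-1}(t)$ and $\nabla\rho$ is a nowhere-vanishing field normal to the leaves. The key computation is that $\mathrm{Hess}\,\rho$ vanishes on leaf directions: for $X,Y$ tangent to $\mathcal{F}$ one has $\mathrm{Hess}\,\rho(X,Y)=X\,g_M(\nabla\rho,Y)-g_M(\nabla\rho,\nabla^M_XY)$, where the first term vanishes because $\rho$ is constant along leaves and the second because $\nabla^M_XY$ is again tangent to the totally geodesic leaf. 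Consequently the Lie derivative $\mathcal{L}_{\nabla\rho}g_M$ vanishes on $T\mathcal{F}\times T\mathcal{F}$, so the (reparametrized) gradient flow carries $L_1$ isometrically onto each $L_t$, $t\in(0,1]$; completeness of these flow lines for $t$ bounded away from $0$ follows from compactness of $M$. This establishes the first assertion.

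For the embedding into the singular leaf I would let $\phi_t\colon L_1\to L_t$ denote the isometries just produced and study the limit $t\to 0^+$. Viewing the $\phi_t$ as isometric immersions of the fixed manifold $L_1$ into the compact manifold $M$, Arzel\`a--Ascoli yields a limit map $\phi_0\colon L_1\to M$, and continuity of $f$ forces $f\circ\phi_0\equiv c(0)$, so that $\phi_0(L_1)\subset f^{-1}(c(0))=L_0$. Since each $L_t$ is totally geodesic in $M$ and $L_t=\phi_t(L_1)$, upgrading the convergence $\phi_t\to\phi_0$ to $C^2$ makes the second fundamental form of $\phi_0(L_1)$ in $M$ the limit of the vanishing second fundamental forms of the $L_t$, hence zero. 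As $L_0$ is itself a (singular) leaf, which I would verify is totally geodesic in $M$, the fact that both $\phi_0(L_1)$ and $L_0$ are totally geodesic in $M$ gives that $\phi_0(L_1)$ is totally geodesic inside $L_0$, which is the desired embedding.

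The delicate point, and the step I expect to be the main obstacle, is this passage to the limit in the second assertion: one must rule out that the isometries $\phi_t$ degenerate as $t\to 0$ --- that is, that the normal curves emanating from $L_1$ develop focal points or that $\phi_0$ fails to be injective or immersive --- and one must secure enough regularity ($C^2$, with uniform bounds coming from the compactness of $M$ and the non-negative curvature of $f^*P$) to transport the condition $\mathrm{II}=0$ through the limit. Together with verifying that the singular fibre $L_0$ is a totally geodesic submanifold, this is where the real work lies; by contrast, the isometry statement itself reduces to the elementary Hessian computation above.
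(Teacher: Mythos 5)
Your first assertion is in good shape: the computation $\mathrm{Hess}\,\rho(X,Y)=X\,g_M(\nabla\rho,Y)-g_M(\nabla\rho,\nabla^M_XY)=0$ for leaf-tangent $X,Y$ (the second term dying by total geodesy of the regular fibers), combined with $\mathcal{L}_{\nabla\rho}g_M=2\,\mathrm{Hess}\,\rho$ and the reparametrized gradient flow, is essentially a direct proof of the fact the paper simply imports from \cite{jw1}, namely that the curve $c$ induces isometries $\phi_t:f^{-1}(c(1))\to f^{-1}(c(t))$ between the leaves of a totally geodesic foliation. (You do implicitly need $c$ embedded so that $\rho=c^{-1}\circ f$ makes sense, but that is a minor point, and your version is more self-contained than the citation.)

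The second assertion is where your proposal has a genuine gap, and it is exactly the one you flag yourself: everything hinges on upgrading $\phi_t\to\phi_0$ to $C^2$ convergence so that the vanishing second fundamental forms of the $L_t$ pass to the limit, and you give no argument for this. Since total geodesy of the limit is the entire content of the second claim, the proof is incomplete as it stands. The paper closes this step by a different, purely $C^0$ device that you should compare with: total geodesy is tested on geodesics rather than on $\mathrm{II}$. For a tangent vector $X(0)=\lim_{t\to 0}X(t)$ of the limit submanifold, with $X(t)=di_{(t,x)}(0,X)$, the $M$-geodesic $\gamma_{X(t)}$ lies entirely in $L_t$ (because $L_t$ is totally geodesic, and in fact $\gamma_{X(t)}=\phi_t\circ\gamma^{L_1}_X$); continuity of the exponential map gives $\gamma_{X(t)}(s)\to\gamma_{X(0)}(s)$, while $f(\gamma_{X(t)}(s))=c(t)\to c(0)$ together with compactness of $M$ gives $d(\gamma_{X(t)}(s),S)\to 0$, so $\gamma_{X(0)}(s)\in S$ and indeed $\gamma_{X(0)}=\lim\phi_t\circ\gamma^{L_1}_X$ lies in the image $\phi_0(L_1)$. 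Thus $M$-geodesics tangent to the limit stay in it, which is total geodesy with no curvature quantities ever passed to a limit. Two further remarks: (i) your step ``verify that $L_0$ is totally geodesic'' is both unnecessary and problematic --- the singular fiber need not even be a manifold (the downstream application is precisely that $S$ is bigger than the regular fibers); if $\phi_0(L_1)\subset S$ is totally geodesic in $M$, it is automatically totally geodesic in $S$ in any reasonable sense, with no hypothesis on $S$. (ii) Injectivity, immersivity and smoothness of the limit map are glossed over by you and by the paper alike (the paper asserts them via ``extending the vector fields $X(t)$''), so the decisive difference between your argument and the paper's is the $C^2$ issue, which the paper's geodesic trick genuinely avoids and yours cannot.
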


From this lemma, we conclude that singular level sets must have, at least the dimension of near fibers. In particular it implies that there is no metric on $S^{8}$ or $S^{10}$ such that the bundles $(-h\eta_8)^*S^7$ or $(-hb_{10})^*S^7$ have connection metrics with non-negative curvature, where $\eta:S^3\to S^2$ and $h:S^7\to S^4$ are the \emph{Hopf} bundles and $\eta_8$ and $b_{10}$ are analytic suspensions of $\eta$ and a generator of $\pi_6S^3$ given in \cite{s1}. By the other hand, these bundles admit $S^3$ actions (which are isometric for many metrics and connections) with quotients exotic spheres. 

\begin{proof}[Proof of Lemma \ref{tg fol lem}:] Write $L=f^{-1}(c(1))$ and $S=f^{-1}(c(0))$. From \cite{jw1}, we now that the curve $c$ induces isometries $\phi_t:f^{-1}(c(1))\to f^{-1}(c(t))$. So we have an embedding $i:(0,1]\times L\to M$. Now, we can find an embeddin $\tilde i:[0,1]\times L\to M$ by taking $\tilde i(0,x)=\lim_{t\to 0}i(t,x)$. it defines a continuos bijective map $\tilde i_0:L\to M$, smoothnes is guaranteed by extending the vector fields $X(t)=di_{(t,x)}(0,X)$ and geodesics doesn't go out of $L$ since both exponential and distance maps are continuos and
\[0=\lim_{t\to 0}(d(\gamma_{X(t)}(s),S))=d(\gamma_{X(0)}(s),S),\]
being $\gamma_X(s)$ the geodesic of the vector $X$ at instant $s$. 
\end{proof}

We also would like to remark that, fot the \emph{Hopf}-bundles $\eta,~h$ and $H:S^{15}\to S^8$, the proof of Theorem \ref{tg thm} also demonstrates that:

\begin{thm} Let  $f^*P\to M$ be a submersion induced by one of the \emph{Hopf}-bundles above with the induced connection. Then, if $\rm{rank} df_x>1$ for all $x$ and $f^*P$ has non-negative sectional curvature, then, any level set of $f$ is totally geodesic.\end{thm}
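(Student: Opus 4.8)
The plan is to run the proof of Theorem~\ref{tg thm} verbatim up to the dimension count and then exploit a numerical coincidence special to the three Hopf bundles. First I would recall from the proof of Lemma~\ref{tg lem} that non-negative curvature, applied to a horizontal vector $\tilde X=(X,0)$ with $X\in\ker df_x$, a vertical vector $\tilde U=(0,U)$, and a horizontal vector $\tilde Z=(Z,\cal L_pdfZ)$, forces \eqref{R1} and hence, through \eqref{R2} and \eqref{RR}, the identity $A(\cal L_pdfZ,\cal L_pOd^2f(X,X))=0$ for every $Z$. Writing $\Xi_X(Y)=A(\cal L_pOd^2f(X,X),\cal L_pY)$ as in the proof of Theorem~\ref{tg thm}, this says exactly that $df(T_xM)\subseteq\ker\Xi_X$, so that $\dim\ker\Xi_X\geq\rm{rank}\,df_x$.

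The key new observation is the dimension of the fibers. For each of $\eta:S^3\to S^2$, $h:S^7\to S^4$, and $H:S^{15}\to S^8$ the fiber has dimension one less than the base, so $\dim N-\dim\cal V=1$ in all three cases. Fatness of these submersions yields the dichotomy already used: $\Xi_X$ is surjective when $d^2f(X,X)\neq0$, in which case $\dim\ker\Xi_X=\dim N-\dim\cal V=1$, and is identically zero otherwise. Combining this with $\dim\ker\Xi_X\geq\rm{rank}\,df_x>1$ rules out the surjective alternative, so I would conclude $d^2f(X,X)=0$ for every $X\in\ker df_x$ --- and now at \emph{every} point $x$, regardless of whether $f(x)$ is a regular value. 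This is precisely where the hypothesis $\rm{rank}\,df_x>1$ substitutes for the full-rank assumption of Theorem~\ref{tg thm}.

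To finish, I would repeat the last step of the proof of Theorem~\ref{tg thm}: near a point $x$ of a level set, extend $X\in\ker df_x$ to a vector field $\bar X$ tangent to that level set, so that $df\bar X=0$; then \eqref{IIf} reads $d^2f(X,X)=-df(\nabla^M_{\bar X}\bar X)$, whence $\nabla^M_{\bar X}\bar X\in\ker df_x$ and $pr_{(\ker df)^\bot}\nabla^M_{\bar X}\bar X=0$. Since the latter is the second fundamental form of the level set and $d^2f(X,X)=0$ now holds at all points, every level set is totally geodesic.

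I expect the main obstacle to be twofold. First, one must confirm that the fatness dichotomy for $\Xi_X$ is available for all three bundles, including the octonionic Hopf bundle $H$, which is not a principal bundle; here fatness should follow from the positivity of the vertizontal curvatures $|A^\dagger_XU|^2$ of the round metric, but this needs to be stated carefully. Second, passing from regular values to \emph{all} values is delicate, since at a critical point the level set need not be a smooth submanifold; the conclusion should be read as asserting that the smooth locus of each level set is totally geodesic, and one must check that the tangential extension $\bar X$ with $df\bar X=0$ exists wherever the level set is a manifold.
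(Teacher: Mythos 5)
Your proposal is correct and is exactly the argument the paper intends: the paper gives no separate proof, saying only that ``the proof of Theorem~\ref{tg thm} also demonstrates'' the result, and your reconstruction --- replacing the regularity hypothesis by the observation that $\dim N-\dim\cal V=1$ for all three Hopf fibrations, so that $\rm{rank}\,df_x>1$ already forces $\dim\ker\Xi_X>\dim N-\dim\cal V$ and hence $d^2f(X,X)=0$ --- is precisely how that proof carries over. Your two caveats (fatness of the octonionic Hopf bundle, and reading the conclusion on the smooth locus of singular level sets) are sensible refinements that the paper itself glosses over.
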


An example of map that violates this condition is the geodesic two fold $\rho_2:S^n\to S^n$ starting at $e_0\in S^n$, defined by
\[\cos te_0+\sin tX\mapsto \cos 2te_0+\sin 2tX,\]
where $X\in T_{e_0}S^n$. In the general case of a geodesic $k$-fold, the condition given by Lema \ref{tg lem} can also be verified. We believe that, in certain sense, these are the only maps in the case where $N$ is a round sphere, in the sense that, if an analytic map gives a connection metric with non-negative curvature, that it is a composition of a (non-Riemannian) submersion with a map like this.

\subsection*{Acknowledgment}
This work is part of the author's Ph.D. thesis in Unicamp, Brazil, under C. Dur\'an and A. Rigas.

\bibliography{bib1}{}

\begin{thebibliography}{1}

\bibitem{DPR}
Puettmann T. Rigas~A. Dur\'an, C.
\newblock An infinite family of gromoll-meyer spheres.
\newblock {\em Archiv der Mathematik}, 95:269--282, 2010.

\bibitem{gw}
Walshap~G. Gromoll, D.
\newblock {\em Metric Foliations and Curvature}.
\newblock Birkhäuser Verlag, Basel, 2009.

\bibitem{jw1}
D.~Johnson and L.~Whitt.
\newblock Totally geodesic foliations.
\newblock {\em J. Diff. Geo.}, 15:225--235, 1980.

\bibitem{nara}
S.~Narasimham and S.~Ramanan.
\newblock Existence of universal connections.
\newblock {\em American journal of Mathematics}, 83:563--572, 1961.

\bibitem{s0}
L.~Speran\c{c}a.
\newblock On explicit constructions of exotic spheres.
\newblock {\em preprint}, arXiv, 2012.

\end{thebibliography}
\bibliographystyle{plain}

\end{document}